\newtheorem{thm}{Theorem}[section]
\newtheorem{lem}[thm]{Lemma}
\newtheorem{prop}[thm]{Proposition}
\theoremstyle{definition}
\theoremstyle{remark}
\theoremstyle{remark}
\theoremstyle{remark}
\theoremstyle{remark}
\theoremstyle{remark}
\theoremstyle{remark}
\DeclareMathOperator{\Ad}{Ad}
\DeclareMathOperator{\Aut}{Aut}
\DeclareMathOperator{\Br}{Br}
\DeclareMathOperator{\id}{id}
\DeclareMathOperator{\ind}{ind}
\DeclareMathOperator{\Lie}{Lie}
\DeclareMathOperator{\Int}{Int}
\DeclareMathOperator{\PGL}{PGL}
\DeclareMathOperator{\Sim}{Sim}
\DeclareMathOperator{\GL}{GL}
\DeclareMathOperator{\Q}{\mathbb{Q}}
\DeclareMathOperator{\G}{\mathbb{G}}
\newcommand{\etale}{\'etal\@ifstar{\'e}{e\space}}
\newcommand{\CT}{Colliot-Th\'el\`en\@ifstar{\'e}{e}}
\newcommand*{\TitleFont}{%
      \usefont{\encodingdefault}{\rmdefault}{b}{n}%
      \fontsize{14}{20}%
      \selectfont}
\newcommand*{\AddFont}{%
      \usefont{\encodingdefault}{\rmdefault}{}{n}%
      \fontsize{10}{20}%
      \selectfont}
\begin{document}

\nocite{*}

\title{\TitleFont \textbf{TOTARO'S QUESTION FOR ADJOINT GROUPS \\ OF TYPES $A_{1}$ and $A_{2n}$}}
\author{REED LEON GORDON-SARNEY}
\date{\AddFont DEPARTMENT OF MATHEMATICS \& COMPUTER SCIENCE \\ EMORY UNIVERSITY, ATLANTA, GA 30322 USA}

\maketitle

\begin{abstract}
Let $G$ be a smooth connected linear algebraic group over a field $k$, and let $X$ be a $G$-torsor.
Totaro asked: if $X$ admits a zero-cycle of degree $d \geq 1$, then does $X$ have a closed \etale point of degree dividing $d$?
We give an affirmative answer for absolutely simple classical adjoint groups of types $A_1$ and $A_{2n}$ over fields of characteristic $\neq 2$.
\end{abstract}
\vspace{0.15cm}
\section{Introduction}
Given a variety $X$ over a field $k$, one can define its \emph{index} by \[ \ind(X) := \gcd\{[L:k] : L/k \textup{ is a finite field extension such that } X(L) \neq \varnothing \}. \]
The index of a variety equals the minimal positive degree of a zero-cycle of its closed points, and it is natural to ask: does a variety admit a closed point of degree equal to its index?
It is well-known that the answer is negative for general varieties; for example, Parimala \cite{pari05} produced a projective homogeneous space under a smooth connected linear algebraic group over $\Q_p((t))$ admitting a zero-cycle of degree 1 with no rational points.
One would hope that the question would have an affirmative answer for some nice class of varieties.

Serre originally raised the question in the case of principal homogeneous spaces (or torsors) of index 1 under smooth connected linear algebraic groups \cite{serre95}.
Since such a $G$-torsor $X$ over $k$ has a rational point if and only if its corresponding cohomology class $[X] \in H^1(k,G)$ is trivial, we can phrase Serre's question in the language of Galois cohomology.
\\\\
\noindent
\textbf{Serre's Question}. Let $G$ be a smooth connected linear algebraic group over a field $k$, and let $L_1,\ldots,L_m/k$ be finite field extensions with $\gcd\{[L_i:k]\} = 1$.
Does the natural map \[ H^1(k,G) \to \prod H^1(L_i, G_{L_i}) \] have trivial kernel?
\\

To date, no counterexamples are known, and the literature contains affirmative proofs in many special cases (e.g., Bayer--Lenstra \cite{bayerlenstra90}, Bhaskhar \cite{niv16}, and Black \cite{black11a,black11b}).
Refer to Parimala \cite{pari05} for a more comprehensive review of what is known on the question.
Totaro generalized Serre's question, asking about closed \etale points on torsors of \emph{arbitrary} index under smooth connected linear algebraic groups \cite{totaro04}.
\\\\
\noindent
\textbf{Totaro's Question}. Let $G$ be a smooth connected linear algebraic group over a field $k$, and let $[X] \in H^1(k,G)$. 
Is there a separable field extension $F/k$ with $[F:k] = \ind(X)$ such that $[X_F] = 1 \in H^1(F,G_F)$?
\\

While it is expected that the answer to Totaro's question is `yes' in general, affirmative proofs in special cases are extremely rare (cf. Black--Parimala \cite{blackpari14}, Totaro \cite{totaro04}, Garibaldi--Hoffman \cite{garihoff06}, and G.-S. \cite{totori}).
This paper extends what little is known on Totaro's question with a positive answer for an infinite class of linear algebraic groups.

Let us proceed with some notation and definitions.
Fix a field $k$ of characteristic $\neq 2$, let $K/k$ be an \etale quadratic extension, and let $A$ be a central simple algebra over $K$.
An antiautomorphism $\sigma$ on $A$ is called an \emph{involution} if $\sigma^2 = \id$; it is called an involution \emph{of the first kind} if $[K : K^\sigma] = 1$ and \emph{of the second kind} or \emph{unitary} if $[K : K^\sigma] = 2$.
Suppose $\sigma$ is unitary with fixed field $K^\sigma = k$.
For clarity, we call $\sigma$ a \emph{$K/k$--involution}.
Define the automorphisms of $(A,\sigma)$ to be the $K$-automorphisms of $A$ that commute with $\sigma$.
Then \[\Aut(A,\sigma)(k) \cong \{ \Int(a) \in \Aut_K(A) : a \in A^\times, \sigma(a)a \in k^\times \},\] where $\Int(a) : A \to A$ is given by $\Int(a)(x) = axa^{-1}$.
The elements $a \in A^\times$ such that $\sigma(a)a \in k^\times$, called the \emph{similitudes} of $(A,\sigma)$, form a group denoted $\Sim(A,\sigma)(k)$; it is clear that they only determine the automorphisms of $(A,\sigma)$ up to scalars from $K^\times$.
Viewed functorially, we have a short exact sequence of linear algebraic groups over $k$ \[ 1 \to R_{K/k}\G_m \to \Sim(A,\sigma) \xrightarrow{\Int} \Aut(A,\sigma) \to 1.\]

Linear algebraic groups with trivial center are said to be \emph{adjoint}.
Adjoint groups appear as images of adjoint representations $\Ad : G \to \Aut(\Lie(G))$ where $\Lie(G)$ is the Lie algebra associated to a linear algebraic group $G$.
The classification of \emph{absolutely simple} (i.e., simple over an algebraic closure) linear algebraic groups separates \emph{classical} groups from \emph{exceptional} groups where absolutely simple classical groups are classified into types $A_n$, $B_n$, $C_n$, and $D_n$ (non-trialitarian $D_4$ excluded).
By work of Weil, classical adjoint groups can be interpreted in the language of algebras with involution; in particular, an absolutely simple classical adjoint group of type $A_n$ over $k$ is isomorphic to $\Aut(A,\sigma)$ for a central simple algebra $A$ of degree $n+1$ over an \etale quadratic extension $K/k$ and $\sigma$ a $K/k$-involution on $A$.

In this paper, we prove

\begin{thm}
\label{mainthm}
Let $G$ be an absolutely simple classical adjoint group of type $A_1$ or $A_{2n}$ over a field $k$ of characteristic $\neq 2$, and let $X$ be a $G$-torsor over $k$.
Then there exists a separable field extension $F/k$ of degree $\ind(X)$ such that $[X_F] = 1 \in H^1(F,G_F)$.
\end{thm}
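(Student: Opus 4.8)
The plan is to translate the statement into the language of algebras with involution and then into quadratic and Hermitian forms. Writing $G = \PGU(A,\sigma)$ for a central simple algebra $A$ of degree $n+1$ over the \etale quadratic extension $K/k$ with $K/k$-involution $\sigma$, I would first exploit the exact sequence $1 \to R_{K/k}\G_m \to \Sim(A,\sigma) \xrightarrow{\Int} \PGU(A,\sigma) \to 1$. Since $H^1(k,R_{K/k}\G_m) \cong H^1(K,\G_m) = 0$ by Shapiro's lemma and Hilbert 90, the boundary map identifies a class $[X] \in H^1(k,G)$ with the isomorphism class of a central simple $K$-algebra with $K/k$-involution $(B,\tau)$, the residue in $H^2(k,R_{K/k}\G_m) = \Br(K)$ recording $[B]-[A]$. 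Under this dictionary $[X_F] = 1$ precisely when $(B_F,\tau_F) \cong (A_F,\sigma_F)$, so $\ind(X)$ becomes the $\gcd$ of the degrees $[F:k]$ of the separable extensions over which $(B,\tau)$ and $(A,\sigma)$ become isomorphic, and the task is to realize this $\gcd$ by a \emph{single} such field.

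\noindent
\textbf{The case $A_1$.} Here I would use that type $A_1$ admits no nontrivial outer forms (the Dynkin diagram has no symmetry), so $G \cong \SO(q)$ for a $3$-dimensional quadratic form $q$ over $k$, equivalently $G \cong \PGL_1(Q)$ for a quaternion algebra $Q/k$. A torsor then corresponds to a quaternion algebra $Q'$, and $[X_F]=1$ if and only if $Q'_F \cong Q_F$, i.e.\ if and only if the central simple $k$-algebra $Q' \otimes_k Q^{\mathrm{op}}$ is split over $F$. Consequently $\ind(X) = \ind(Q'\otimes_k Q^{\mathrm{op}})$, which is realized on the nose by a separable splitting field of degree equal to the index of that algebra (a separable maximal subfield of its underlying division algebra). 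This settles $A_1$ and exhibits the mechanism I want to reproduce: reduce the splitting of $X$ to the splitting of an explicit central simple $k$-algebra, for which a separable splitting field of degree equal to the index always exists.

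\noindent
\textbf{The case $A_{2n}$.} Now $\deg A = 2n+1$ is odd, and I would decompose the splitting condition into a Brauer obstruction --- that $F$ split $[B]-[A] \in \Br(K)$, so that $B_F \cong A_F$ as algebras --- and, once the algebras are identified, a Hermitian-form obstruction --- that the involutions $\tau_F$ and $\sigma_F$ be conjugate, equivalently that two Hermitian forms over $(A,\sigma)$ become similar. Because the degree is odd, $[B]-[A]$ has odd order, so the Brauer obstruction is killed by a separable extension of odd degree equal to its index (a separable maximal subfield of the underlying division algebra). The residual obstruction is $2$-primary: the \emph{odd} rank $2n+1$ is decisive here, since scaling a Hermitian form multiplies its determinant by an odd power of the scalar, which lets me reduce similarity to isometry, and Springer's theorem shows that odd-degree extensions neither create nor destroy the residual anisotropic part. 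Transferring through the trace form of $K/k$, this part becomes an odd-dimensional quadratic-form problem over $k$, whose index I would realize by a separable extension whose degree is a power of $2$.

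\noindent
\textbf{Assembling one field; the main obstacle.} Writing $\ind(X) = d_{\mathrm{odd}} \cdot 2^{s}$, I would split $X$ in two stages: first over a separable odd extension $F_1/k$ of degree $d_{\mathrm{odd}}$ handling the (odd) Brauer obstruction, and then over a separable extension $F/F_1$ of degree $2^{s}$ handling the $2$-primary Hermitian-form obstruction, so that the tower has degree exactly $d_{\mathrm{odd}}\cdot 2^{s} = d$. Springer's theorem guarantees the compatibility of the two stages: the $2$-primary index of the form obstruction is unchanged on passing to the odd extension $F_1$, so nothing is lost or gained between the stages. The step I expect to be the main obstacle is the exact-degree realization of the $2$-primary part --- producing a \emph{separable} field of degree precisely $2^{s}$ over which two odd-rank Hermitian (equivalently, quadratic) forms become isometric, rather than merely of degree a multiple of $2^{s}$. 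The odd-rank hypothesis is exactly what makes this bookkeeping come out evenly: it forces the difference form into a controlled Witt class whose index is governed by an explicit central simple $k$-algebra, and then --- as in the case $A_1$ --- a separable splitting field of degree equal to that index does the job.
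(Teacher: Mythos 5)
Your dictionary and your type $A_1$ argument are sound and essentially agree with the paper: the paper reaches $G \cong \PGL_1(Q_0)$ for a quaternion $k$-algebra via Albert's descent theorem for quaternion algebras with unitary involution (rather than via absence of outer forms), and then realizes the index by a separable maximal subfield of the division algebra underlying $Q_0 \otimes_k B_0$, exactly your mechanism. The gap is in type $A_{2n}$, at the two places where the paper has to work hardest. First, your odd stage does not exist as described: the division algebra $D$ underlying $[B]-[A]$ is central simple over $K$, so a ``separable maximal subfield of the underlying division algebra'' is an extension of $K$ and has \emph{even} degree $2\ind(D)$ over $k$; it cannot serve as your $F_1$. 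What is actually needed is a separable extension of $k$, linearly disjoint from $K$, of degree $\ind(D)$, whose compositum with $K$ splits $D$ --- and for an arbitrary central simple $K$-algebra of odd index no such field need exist. The paper manufactures it from the unitary structure: since $A$ and $B$ both carry $K/k$-involutions, $\textup{cores}_{K/k}[D] = 0$, so $D$ admits a unitary involution $\delta$ by Theorem \ref{cores}, and Jacobson's Theorem \ref{max} then produces a $\delta$-stable separable maximal subfield $E$ with $E = KE^\delta$; the fixed field $E^\delta$ is the required degree-$\ind(D)$ extension of $k$. Nothing in your sketch supplies this step.

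Second, your $2$-primary stage is both more general than needed and, as you yourself concede, unproven. The paper never solves an exact-degree $2$-power realization problem for quadratic or Hermitian forms; instead it observes that every trivializing field splits $D$, and conversely every field splitting $D$ trivializes the torsor after at most composing with $K$, because over $K$ any two $K/k$-involutions on the same algebra become the exchange involution (Lemma \ref{swap}). Hence $\ind(X)$ is either $\ind_{\textup{Sch}}(A \otimes_K B^{\textup{op}})$ or twice it, so your $2^s$ is at most $2$. In the odd case, oddness of the gcd forces the existence of a single trivializing extension of odd degree, and odd-degree descent for unitary involutions (Proposition \ref{schar}, the Scharlau-transfer analogue of your appeal to Springer's theorem) shows the involutions are already isomorphic over $E^\delta$, so $F = E^\delta$ works; in the even case one simply takes $F = KE^\delta$. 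Thus the factor of $2$ is realized by $K$ itself via the exchange involution --- precisely the idea that is missing from the step you flagged as your main obstacle.
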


Theorem \ref{mainthm} has a concrete interpretation in terms of algebras with unitary involution.
Let $K/k$ be an \etale quadratic extension, let $A$ and $B$ be central simple algebras over $K$ of degree 2 or odd degree, and let $\sigma$ and $\tau$ be $K/k$-involutions on $A$ and $B$.
If $L_1,\ldots,L_m/k$ are finite field extensions with $\gcd\{[L_i:k]\} = d$ such that $(A,\sigma)_{L_i} \cong (B,\tau)_{L_i}$ for $i = 1,\ldots,m$, then there is a separable field extension $F/k$ with $[F:k] \mid d$ such that $(A,\sigma)_F \cong (B,\tau)_F$.

\section{Preliminaries}

Proceeding with the notation from above, let $K/k$ be an \etale quadratic extension, let $A$ be a central simple algebra over $K$, and let $\sigma$ be a $K/k$--involution on $A$.
If $A$ is Brauer--equivalent to a division algebra $D$, then $D$ also admits a unitary involution $\delta$ by the existence criterion:

\begin{thm}[Albert--Riehm--Scharlau \cite{schar75}, pp. 31]
\label{cores}
A central simple algebra $D$ over $K$ admits a $K/k$--involution if and only if $[D] \in \ker[\Br K \xrightarrow{\textup{cores}} \Br k]$.
\end{thm}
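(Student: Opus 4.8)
The statement is the classical Albert--Riehm--Scharlau existence criterion, and the plan is to prove it by Galois descent applied to the conjugate tensor square. Write $\iota$ for the nontrivial $k$-automorphism of $K$ and ${}^{\iota}D$ for the conjugate algebra (the same ring with $K$ acting through $\iota$). A $K/k$-involution $\sigma$ is an $\iota$-semilinear anti-automorphism with $\sigma^2 = \id$, and unwinding definitions shows that $\sigma$ is the same datum as a $K$-algebra isomorphism $D \xrightarrow{\sim} ({}^{\iota}D)^{\mathrm{op}}$; in particular its mere existence already forces $[D] + \iota^{*}[D] = 0$ in $\Br K$. The corestriction enters through its concrete description: $\mathrm{cores}_{K/k}(D)$ is the $k$-algebra obtained by descending $D \otimes_K {}^{\iota}D$ along the $\iota$-semilinear switch $s(x \otimes y) = y \otimes x$, which satisfies $s^2 = \id$, so that $\mathrm{cores}_{K/k}(D) \otimes_k K \cong D \otimes_K {}^{\iota}D$. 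A preliminary Morita reduction lets me assume $D$ is a division algebra (a unitary involution transfers between $D$ and $M_r(D)$, and the Brauer class is unchanged), after which everything is controlled by this one tensor square.

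For necessity, suppose $\sigma$ exists. Using $\sigma$ to identify $D \otimes_K {}^{\iota}D$ with $\mathrm{End}_K(D)$, the switch $s$ becomes conjugation by the $\iota$-semilinear map $\sigma$ of the $k$-vector space $D$ itself. Because $\sigma^2 = \id$, this $\sigma$ is a genuine $\iota$-semilinear involution of the $K$-vector space $D$, that is, a Galois descent datum with $\GL$-coefficients; by Hilbert 90 it is the standard datum attached to a $k$-form $D_0$, so the fixed algebra $\mathrm{cores}_{K/k}(D) \cong \mathrm{End}_k(D_0)$ is split and $\mathrm{cores}_{K/k}[D] = 0$. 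The role of the hypothesis $\sigma^2 = \id$ is exactly visible here: it realizes the descent at the level of the vector space rather than only at the level of $\PGL$, where $H^1$ would merely record a class in $\Br(K/k)$; this is why the sharp conclusion $\mathrm{cores}_{K/k}[D] = 0$ holds rather than the weaker $(1+\iota^{*})[D] = 0$.

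Conversely, assume $\mathrm{cores}_{K/k}[D] = 0$. Then $D \otimes_K {}^{\iota}D \cong \mathrm{cores}_{K/k}(D) \otimes_k K$ is split, yielding a $K$-algebra isomorphism $D \xrightarrow{\sim} ({}^{\iota}D)^{\mathrm{op}}$, equivalently an $\iota$-semilinear anti-automorphism $\sigma_0$ of $D$, which a priori need not satisfy $\sigma_0^2 = \id$. By Skolem--Noether $\sigma_0^2 = \Int(u)$ for some $u \in D^{\times}$, and a short computation shows $\sigma_0(u)$ is a scalar multiple of $u$. I would then seek $v \in D^{\times}$ so that $\Int(v) \circ \sigma_0$ is an involution; this reduces to a norm-type equation whose solvability is guaranteed precisely by $\mathrm{cores}_{K/k}[D] = 0$ via a Hilbert 90 argument, with $\ch k \neq 2$ used to symmetrize. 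The corrected map is the desired $K/k$-involution.

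The main obstacle is this last correction step: turning the abstract isomorphism $D \cong ({}^{\iota}D)^{\mathrm{op}}$ into a genuine order-two involution. Producing an isomorphism is cheap once the tensor square splits, but enforcing $\sigma^2 = \id$ requires the Skolem--Noether normalization together with the precise norm computation governed by the vanishing of the corestriction. This is exactly the point that separates the sharp criterion from the naive condition $(1+\iota^{*})[D]=0$ and where the characteristic assumption is spent.
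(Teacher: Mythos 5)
First, a remark on the comparison itself: the paper does \emph{not} prove Theorem \ref{cores}; it quotes it as a classical result of Albert--Riehm--Scharlau and uses it as a black box. So there is no proof in the paper to measure yours against, and your proposal has to be judged as a proof of the classical criterion. Judged that way, your necessity direction is complete and is the standard argument: $\sigma$ identifies ${}^{\iota}D \otimes_K D$ with $\operatorname{End}_K(D)$, the switch becomes $f \mapsto \sigma \circ f \circ \sigma$, and Galois descent of the $K$-vector space $D$ along the semilinear involution $\sigma$ exhibits the fixed algebra as $\operatorname{End}_k\bigl(\operatorname{Sym}(D,\sigma)\bigr)$, which is split.

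The sufficiency direction, however, has a genuine gap, located exactly at the step you flag as ``the main obstacle.'' Everything you actually establish there --- the existence of $\sigma_0$, Skolem--Noether giving $\sigma_0^2 = \Int(u)$, and the centrality of $\sigma_0(u)u$ (a side correction: this makes $\sigma_0(u)$ a central multiple of $u^{-1}$, not of $u$) --- uses only the weaker hypothesis $(1+\iota^{*})[D] = 0$, which is the restriction of $\textup{cores}_{K/k}[D]$ to $\Br K$. The gap between $(1+\iota^{*})[D]=0$ and $\textup{cores}_{K/k}[D]=0$ is measured by $\Br(K/k) \cong k^{\times}/N_{K/k}(K^{\times})$, and the obstruction to correcting $\sigma_0$ into an involution is precisely a class in this group: setting $\mu = \sigma_0(u)u \in k^{\times}$, one checks that its class modulo $N_{K/k}(K^{\times})$ is independent of replacing $\sigma_0$ by $\Int(v)\circ\sigma_0$ or rescaling $u$ by $K^{\times}$, and that an involution exists if and only if $\mu \in N_{K/k}(K^{\times})$. (Once $\mu = 1$ is arranged, the semilinear involution $x \mapsto \sigma_0(x)u$ of the vector space $D$ has a $k$-form of fixed points, any nonzero one of which is invertible when $D$ is division, and conjugating $\sigma_0$ by it gives the involution; this is the only place a Hilbert 90/descent argument legitimately enters.) What your sketch never does is relate this obstruction class $[\mu] \in k^{\times}/N_{K/k}(K^{\times})$ to the corestriction. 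Asserting that the norm equation is solvable ``precisely by $\textup{cores}_{K/k}[D]=0$ via a Hilbert 90 argument'' is assuming exactly the identity $[\mu] = \textup{cores}_{K/k}[D]$ in $\Br(K/k)$ that constitutes the hard half of the theorem --- the part due to Albert, Riehm, and Scharlau. Hilbert 90 cannot supply it: it is an $H^1$ statement, and the relevant $H^1$ vanishes, whereas this obstruction is a Brauer-class ($H^2$-level) invariant which is generally nonzero. Concretely, writing the switch on ${}^{\iota}D\otimes_K D \cong \operatorname{End}_K(V)$ as $\Int(g)$ with $g$ semilinear and $g^2 = c \in k^{\times}$, the fixed algebra representing $\textup{cores}_{K/k}[D]$ is Brauer-equivalent to the cyclic algebra $(K/k,\iota,c)$, so $\textup{cores}_{K/k}[D] = 0$ iff $c \in N_{K/k}(K^{\times})$; some identification of this kind, tying the solvability of your norm equation to the corestriction class, is the missing heart of the proof.
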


Since Totaro's question asks about the existence of a \emph{separable} field extensions over which a given torsor has a point, the following classical theorem will prove essential.

\begin{thm}[Jacobson \cite{jacobson96}, Theorem 5.3.18]
\label{max}
Let $D$ be a central division algebra over $K$ with $K/k$--involution $\delta$.
Then there exists a maximal subfield $E \subseteq D$, separable over $k$, such that $\delta(E) = E$ and $E = KE^\delta$.
\end{thm}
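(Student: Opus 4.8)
The plan is to reduce the statement to a genericity argument about symmetric elements. Since finite division algebras are fields by Wedderburn's theorem, if $k$ is finite then $\Br K = 0$, forcing $D = K$ and rendering the conclusion trivial with $E = K$; so I may assume $k$ is infinite, and since $D$ is a division algebra I treat $K$ as a field. Write $n = \deg D$ and let $S = \{x \in D : \delta(x) = x\}$ be the $k$-space of symmetric elements, which has dimension $n^2$ over $k$. The key preliminary observation is that for $x \in S$ the reduced characteristic polynomial $\mathrm{Prd}_x(T)$, a priori an element of $K[T]$, in fact lies in $k[T]$: for an involution of the second kind one has $\mathrm{Prd}_{\delta(x)} = \overline{\mathrm{Prd}_x}$, where the bar denotes the nontrivial $k$-automorphism of $K$ applied to coefficients, so $\delta(x) = x$ forces each coefficient to be $\delta$-fixed, hence to lie in $k$.

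The heart of the argument is to produce a single $x \in S$ whose reduced characteristic polynomial $\mathrm{Prd}_x(T) \in k[T]$ is separable (squarefree). Granting this, I observe that $k[x]$, being a commutative domain inside the division algebra $D$, is a field, and that $\mathrm{Prd}_x$ is a power of the minimal polynomial $\mu_{x,K}$ of $x$ over $K$; squarefreeness forces this power to be trivial, so $[K[x]:K] = n$. Because $x$ is symmetric, $\delta$ fixes all of $k[x]$, whence $k[x] \subseteq S$ and $k[x] \cap K = S \cap K = k$; this yields $[k[x]:k] = [K[x]:K] = n$, and since $\mu_{x,k}$ divides the separable polynomial $\mathrm{Prd}_x$, the field $E^\delta := k[x]$ is separable over $k$ of degree $n$ and consists of symmetric elements.

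To build the desired maximal subfield, I note that $E^\delta$ centralizes the center $K$, and $K \not\subseteq E^\delta$ since every element of $E^\delta$ is $\delta$-fixed while $K$ is not. Hence the subring $E := K \cdot E^\delta = K[x]$ generated inside $D$ is a commutative domain, so a field, with $[E : E^\delta] = 2$ and $[E : K] = n$; thus $E$ is a maximal subfield, it is separable over $k$ as the compositum of the separable extensions $E^\delta/k$ and $K/k$, it satisfies $\delta(E) = E$ because $\delta$ fixes $E^\delta$ and stabilizes $K$, and it satisfies $E = K E^\delta$ with fixed field $E^\delta$. This is exactly the assertion.

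The main obstacle is the existence step, which I would settle by a Zariski-density argument. The assignment $x \mapsto \mathrm{Prd}_x(T)$ is given by $k$-polynomial functions on $S \cong \mathbb{A}^{n^2}_k$, so its discriminant $\Delta(x) = \mathrm{disc}(\mathrm{Prd}_x)$ is a polynomial function on $S$ defined over $k$, and it suffices to show $\Delta \not\equiv 0$, which may be checked after base change to $\bar k$. Over $\bar k$ the extension $K$ splits, $D \otimes_k \bar k \cong M_n(\bar k) \times M_n(\bar k)$, and $\delta$ becomes the exchange involution $(a,b) \mapsto (b^{t}, a^{t})$, whose symmetric elements are $\{(a, a^{t})\} \cong M_n(\bar k)$ with $\mathrm{Prd}_{(a,a^{t})}$ equal to the characteristic polynomial of $a$. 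Since the characteristic polynomial takes separable values, for instance on a diagonal matrix with $n$ distinct entries, $\Delta$ is not identically zero over $\bar k$, hence not over $k$; as $k$ is infinite, $\Delta$ is nonzero at some $k$-point $x \in S$, producing the required symmetric element and completing the proof.
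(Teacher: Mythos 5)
The paper never proves this statement: it is imported as a black box from Jacobson \cite{jacobson96} (Theorem 5.3.18), so there is no internal proof to compare yours against. Your argument is correct and self-contained, and it is a clean genericity proof: dispose of finite $k$ by Wedderburn; observe that symmetric elements of a unitary involution have reduced characteristic polynomial in $k[T]$; use Zariski density of $k$-points of $S$ (for $k$ infinite) to find a symmetric $x$ with $\mathrm{disc}(\mathrm{Prd}_x) \neq 0$, checking nonvanishing of the discriminant after base change to $\bar{k}$ on the exchange-involution model, where symmetric elements are the pairs $(a,a^t)$ and $\mathrm{Prd}$ is the characteristic polynomial of $a$; then $E^\delta = k[x]$ and $E = K[x]$ do the job, with $[k[x]:k]=[K[x]:K]$ following from $k[x]\cap K = k$ because $K/k$ is Galois of degree $2$. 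Two small points of hygiene: first, ``squarefree'' and ``separable'' are not synonyms over an imperfect field, and what your discriminant criterion actually delivers is separability (distinct roots in $\bar{k}$), which is exactly what you need both to rule out $\mathrm{Prd}_x$ being a proper power of the minimal polynomial and to conclude that $k(x)/k$ is separable --- so phrase the existence step as $\mathrm{disc}(\mathrm{Prd}_x)\neq 0$ and drop ``squarefree.'' Second, $\delta \otimes \mathrm{id}$ on $M_n(\bar{k}) \times M_n(\bar{k})$ is only \emph{isomorphic} to the exchange involution (this is the same Proposition 2.4 of \cite{thebook} the paper invokes elsewhere), not literally equal to it; this is harmless since an isomorphism of algebras with involution preserves symmetric elements and reduced characteristic polynomials, hence the nonvanishing of your $\Delta$, but the sentence should say so. With those adjustments your proof stands, and it has the virtue of making the paper's key input transparent rather than cited.
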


If $G \cong \Aut(A,\sigma)$ is absolutely simple and adjoint of type $A_n$, then $H^1(k,G)$ classifies isomorphism classes of algebras of degree $n+1$ over $K$ with unitary involution.
Since this Galois cohomology set has trivial element $[(A,\sigma)]$, for any field extension $L/k$,
\[ \begin{array}{rcl}
[(B,\tau)]_L = 1 \in H^1(L,G_L) & \Leftrightarrow & (A,\sigma) \otimes_k L \cong (B,\tau) \otimes_k L \\
& \Leftrightarrow & A \otimes_k L \cong B \otimes_k L \textup{ and } \sigma \otimes \id_L \cong \tau \otimes \id_L \\
& \Leftrightarrow & L \textup{ splits } A \otimes_K B^\textup{op} \textup{ and } \sigma \otimes \id_L \cong \tau \otimes \id_L.
\end{array} \]
Our objective then is to find minimal separable field extensions of $k$ that split $A \otimes_K B^{\textup{op}}$ followed by minimal separable field extensions to make the involutions isomorphic.
In fact, $\sigma$ is the adjoint involution of some hermitian form on $(D,\delta)$ determined up to similarity in $k^\times$, and two unitary involutions on $A$ are isomorphic if and only if their associated hermitian forms are similar.
So once the underlying algebras are isomorphic, it suffices to find a minimal separable field extension to make the corresponding hermitian forms similar.

Write $W(k)$ for the Witt ring of quadratic forms over $k$, and let $W(D,\delta)$ denote the Witt group of hermitian forms over $(D,\delta)$.
The tensor product of forms induces a $W(k)$--module structure on $W(D,\delta)$.
The next two claims will be critical to the proof of Theorem \ref{mainthm}.

\begin{lem}
\label{swap}
If $\sigma$ and $\tau$ are $K/k$--involutions on $A$, then $(A,\sigma) \otimes_k K \cong (A,\tau) \otimes_k K$.
\end{lem}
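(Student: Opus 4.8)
The plan is to exploit the fact that tensoring with $K$ splits the center of $A$ and trivializes the unitary structure of \emph{any} $K/k$--involution, so that both base--changed algebras with involution collapse to the same model. First I would record that, since $K/k$ is \etale quadratic with nontrivial automorphism $\iota$ and $\ch k \neq 2$, the $k$--algebra $K \otimes_k K$ is isomorphic to the split \etale quadratic $K$--algebra $K \times K$, the two projections corresponding to the two embeddings of $K$ into itself. Viewing $A$ as central simple over $K$, this yields a decomposition $A \otimes_k K \cong A_1 \times A_2$ into a product of two central simple $K$--algebras. The crucial observation is that this decomposition depends only on $A$ and not on any choice of involution.

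Next I would analyze the base--changed involution. For a $K/k$--involution $\sigma$ we have $\sigma|_K = \iota$, so the induced involution $\sigma \otimes \id_K$ acts nontrivially on the center $K \otimes_k K \cong K \times K$; being of the second kind, it must interchange the two minimal central idempotents, hence swap the factors $A_1$ and $A_2$. An involution of the second kind swapping the factors of $A_1 \times A_2$ forces $A_2 \cong A_1^{\textup{op}}$ and is, up to isomorphism, the exchange involution $(x,y) \mapsto (y,x)$ on $A_1 \times A_1^{\textup{op}}$. Concretely, the restriction $\phi := (\sigma \otimes \id_K)|_{A_1} \colon A_1 \to A_2$ is an anti--isomorphism, and using $\phi$ to identify $A_2$ with $A_1^{\textup{op}}$ turns $\sigma \otimes \id_K$ into the exchange involution. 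The identical analysis applies to $\tau$, with its own identifying anti--isomorphism.

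I would then conclude by comparison. Both $(A,\sigma) \otimes_k K$ and $(A,\tau) \otimes_k K$ are isomorphic to the exchange algebra $(A_1 \times A_1^{\textup{op}}, \varepsilon)$ built from the \emph{same} factor $A_1$, precisely because the underlying decomposition of $A \otimes_k K$ is independent of the involution; composing these two isomorphisms produces the desired isomorphism $(A,\sigma) \otimes_k K \cong (A,\tau) \otimes_k K$. The one point deserving care rather than routine verification is the uniqueness of the exchange involution, i.e.\ that any two involutions of the second kind swapping the factors of $A_1 \times A_2$ give isomorphic algebras with involution; I expect this to reduce, via Skolem--Noether, to the observation that any two implementing anti--isomorphisms $A_1 \to A_2$ differ by an inner automorphism, so the resulting exchange structures are conjugate. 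Everything else is bookkeeping about the splitting $K \otimes_k K \cong K \times K$.
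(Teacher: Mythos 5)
Your proposal is correct and follows essentially the same route as the paper: the paper's entire proof is a citation of Proposition 2.4 of Knus--Merkurjev--Rost--Tignol, which states precisely that $(A,\sigma) \otimes_k K \cong (A \times A^{\textup{op}}, \varepsilon)$ with $\varepsilon$ the exchange involution, applied once to $\sigma$ and once to $\tau$. What you have done is open that black box and prove the cited proposition by hand --- splitting $K \otimes_k K \cong K \times K$, noting the decomposition of $A \otimes_k K$ is independent of the involution, and identifying any second-kind involution swapping the factors with the exchange involution --- so your argument is the same skeleton, just self-contained.
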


\begin{proof}
By Proposition 2.4 of Knus--Merkurjev--Rost--Tignol \cite{thebook}, $(A,\sigma) \otimes_k K \cong (A \times A^{\textup{op}}, \varepsilon)$ where $\varepsilon$ is the \emph{exchange} involution on $A \times A^{\textup{op}}$.
The same holds for $(A,\tau) \otimes_k K$.
\end{proof}

\begin{prop}
\label{schar}
Let $\sigma$ and $\tau$ be $K/k$--involutions on $A$, and let $L/k$ be a field extension of odd degree.
If $(A,\sigma) \otimes_k L \cong (A,\tau) \otimes_k L$, then $(A,\sigma) \cong (A,\tau)$.
\end{prop}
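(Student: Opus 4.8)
The plan is to translate the statement about involutions into one about hermitian forms and then to establish a Springer-type descent for their similarity classes. As recalled above, after fixing a division algebra $D$ Brauer-equivalent to $A$ together with a $K/k$-involution $\delta$ (which exists by Theorem \ref{cores}), the involutions $\sigma$ and $\tau$ are the adjoint involutions of nonsingular hermitian forms $h:=h_\sigma$ and $h':=h_\tau$ over $(D,\delta)$, each of the same rank determined by $\deg A$. Under this dictionary, $(A,\sigma)\cong(A,\tau)$ if and only if $h$ and $h'$ are similar over $k$, and the hypothesis $(A,\sigma)_L\cong(A,\tau)_L$ says precisely that $h_L\cong\langle\mu\rangle h'_L$ over $(D_L,\delta_L)$ for some similarity factor $\mu\in L^\times$. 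Since $[K:k]=2$ does not divide the odd degree $[L:k]$, the ring $K\otimes_k L$ is a field, so $D_L$ is again central simple with a unitary involution and the hermitian theory applies. Thus it suffices to show that similarity of hermitian forms over $(D,\delta)$ descends along the odd-degree extension $L/k$.

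To that end I would exploit the Scharlau transfer together with the $W(k)$-module structure on $W(D,\delta)$. Fix a $k$-linear functional $s\colon L\to k$ with $s(1)\neq 0$, and let $s_*\colon W(D_L,\delta_L)\to W(D,\delta)$ be the induced transfer; being functorial, $s_*$ carries isometric forms to isometric forms. Writing $h_L=\langle 1\rangle_L\cdot\mathrm{res}_{L/k}(h)$ and $\langle\mu\rangle h'_L=\langle\mu\rangle_L\cdot\mathrm{res}_{L/k}(h')$, the projection formula
\[ s_*\bigl(z\cdot\mathrm{res}_{L/k}(x)\bigr)\cong s_*(z)\cdot x \qquad\bigl(z\in W(L),\ x\in W(D,\delta)\bigr) \]
applied to the isometry $h_L\cong\langle\mu\rangle h'_L$ yields over $k$ an isometry
\[ q_s\cdot h\;\cong\;q_{s,\mu}\cdot h', \]
where $q_s:=s_*\langle 1\rangle_L$ and $q_{s,\mu}:=s_*\langle\mu\rangle_L$ are $d$-dimensional quadratic forms over $k$ with $d=[L:k]$ odd.

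It remains to extract a genuine $k$-similarity from this identity. First I would pass to anisotropic kernels: by Springer's theorem for hermitian forms \cite{schar75} an anisotropic form over $(D,\delta)$ stays anisotropic over the odd-degree extension $L$, so the anisotropic parts of $h$ and $h'$ have equal rank and already satisfy a scaled isometry over $L$; hence one may replace $h,h'$ by their anisotropic parts and assume them anisotropic of the same rank. Diagonalizing $q_s=\langle a_1,\dots,a_d\rangle$ and $q_{s,\mu}=\langle b_1,\dots,b_d\rangle$, the displayed isometry becomes $\perp_i\langle a_i\rangle h\cong\perp_j\langle b_j\rangle h'$, and the goal is to isolate a single summand isometry $\langle a_i\rangle h\cong\langle b_j\rangle h'$, equivalently $h\cong\langle a_i^{-1}b_j\rangle h'$, producing the desired similarity factor $\lambda=a_i^{-1}b_j\in k^\times$. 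This extraction — descending the similarity factor from $L^\times$ to $k^\times$ — is the main obstacle, and I expect to carry it out by combining Witt cancellation over $(D,\delta)$ with the odd dimension of $q_s$ and $q_{s,\mu}$ (invoking a Scharlau norm-principle argument if needed), which is exactly the point where the oddness of $[L:k]$ is used decisively.
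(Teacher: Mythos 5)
There is a genuine gap, and it sits exactly at the point you flag as ``the main obstacle.'' Your setup (passing to hermitian forms over $(D,\delta)$, applying a transfer $s_*$ and the projection formula to get $q_s\cdot h\cong q_{s,\mu}\cdot h'$ with $q_s=s_*\langle 1\rangle_L$ and $q_{s,\mu}=s_*\langle\mu\rangle_L$) is sound, but the extraction step you propose does not work and cannot be made to work in that form. From an isometry of orthogonal sums $\perp_i\langle a_i\rangle h\cong\perp_j\langle b_j\rangle h'$, Witt cancellation only lets you delete a \emph{common} orthogonal summand from both sides; it gives no way to isolate a single summand isometry $\langle a_i\rangle h\cong\langle b_j\rangle h'$. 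Worse, with an arbitrary functional $s$ the forms $q_s$ and $q_{s,\mu}$ are essentially unrelated odd-dimensional quadratic forms, so the information about the similarity factor $\mu$ is smeared across all $[L:k]$ diagonal entries and no single scalar in $k^\times$ can be recovered from the displayed identity. Since descending the similarity factor from $L^\times$ to $k^\times$ \emph{is} the content of Proposition \ref{schar} (odd-degree descent of isometry, with no scalar, is the much easier Bayer--Lenstra/Springer statement), deferring it to ``a Scharlau norm-principle argument if needed'' leaves the proof without its core.

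The paper closes this gap by refusing to use a generic functional: it adapts both the field and the functional to the similarity factor. First one reduces to the case $L=k(\mu)$: if $k(\mu)\subsetneq L$, filter $L/k(\mu)$ as a tower of simple odd-degree extensions and transfer down the tower using functionals that are $k(\mu)$-linear, which preserves the class $[\mu]$ at each step. Then, for $L=k(\mu)$ of odd degree $d$, take the specific functional $s(1)=1$, $s(\mu)=\cdots=s(\mu^{d-1})=0$. For this choice one has the classical computations (Scharlau \cite{schar85}, Ch.~2, Lemma 5.8; Bayer--Lenstra \cite{bayerlenstra90}, Prop.~1.2) that $s_*\langle 1\rangle_L$ is Witt-equivalent to $\langle 1\rangle$ and $s_*\langle\mu\rangle_L$ is Witt-equivalent to $\langle N_{L/k}(\mu)\rangle$ --- this is where oddness of $d$ is used. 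The projection formula then gives $[h]=[N_{L/k}(\mu)h']$ in $W(D,\delta)$, and since both forms have the same rank, Witt decomposition upgrades this Witt equivalence to an isometry $h\cong N_{L/k}(\mu)h'$. Thus the similarity factor over $k$ is produced explicitly as the norm $N_{L/k}(\mu)$; this norm principle is not an optional add-on but the entire mechanism of the proof, and it is obtained by the choice of functional rather than by any cancellation argument.
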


\begin{proof}
By the above remarks, it suffices to show that if $h$ and $h'$ are hermitian forms over $(D,\delta)$ such that $h \otimes_k L \cong \lambda(h' \otimes_k L)$ for some $\lambda \in L^\times$, then $h \cong \nu h'$ for some $\nu \in k^\times$.
We first assume that $L = k(\lambda)$ is a simple field extension of odd degree over $k$.
There is a natural embedding of modules (cf. Proposition 1.2 of Bayer--Lenstra \cite{bayerlenstra90}) \[ r^* : W(A,\sigma) \to W((A,\sigma) \otimes_k L) \] induced by the extension of scalars, and any non-vanishing $k$--linear functional $s : L \to k$ induces a homomorphism of modules called the \emph{Scharlau transfer} with respect to $s$ \[s_* : W((A,\sigma) \otimes_k L) \to W(A,\sigma),\] sending a class of hermitian forms $[\eta]$ on $D \otimes_k L$ over $L$ with respect to $\delta \otimes 1$ to the class of \[ s \circ \eta : (D \otimes_k L) \times (D \otimes_k L) \xrightarrow{\eta} L \xrightarrow{s} k.\]
Arguing as in Chapter 2, Lemma 5.8 of Scharlau \cite{schar85} and Proposition 1.2 of Bayer--Lenstra \cite{bayerlenstra90}, given the linear functional defined by $s(1) = 1$ and $s(\lambda) = \cdots = s(\lambda^{[L:k]-1}) = 0$, the Scharlau transfer with respect to $s$ satisfies the projection formulas \[ s_*([h \otimes_k L]) = s_*(r^*([h])) = s_*(r^*([1])) \cdot [h] = [h] \] and \[s_*([\lambda(h' \otimes_k L)]) = s_*([\lambda] \cdot r^*([h'])) = s_*([\lambda]) \cdot [h'] = [N_{L/k}(\lambda)h']. \]
Since $h \otimes_k L \cong \lambda(h' \otimes_k L)$, comparing dimensions yields that $h \cong N_{L/k}(\lambda)h'$.

Now, if $k(\lambda) \subsetneq L$, then we can filter $L/k(\lambda)$ as a tower of simple field extensions \[ k(\lambda, \lambda_1, \ldots, \lambda_{n-1}, \lambda_n) \supsetneq k(\lambda, \lambda_1, \ldots, \lambda_{n-1}) \supsetneq \cdots \supsetneq k(\lambda), \] each of odd degree.
Let $L_0 = k(\lambda)$ and $L_i = k(\lambda, \lambda_1, \ldots, \lambda_i)$ for $i = 1,\ldots,n$.
For each field extension $L_i/L_{i-1}$ of degree $d_i$, define an $L_{i-1}$--linear functional $s^i : L_i \to L_{i-1}$ by $s^i(1) = 1$ and $s^i(\lambda_i) = \cdots = s^i(\lambda_i^{d_i-1}) = 0$.
Each of of these linear functionals is also $k(\lambda)$--linear, and so each associated Scharlau transfer satisfies $s^i_*([\lambda]) = [\lambda]$ by the projection formulas.
Then \[ s^i_*([h \otimes_k L_i]) = [h \otimes_k L_{i-1}] \] and \[ s^i_*([\lambda(h' \otimes_k L_i]) = [\lambda(h' \otimes_k L_{i-1})]\] for each $i = 1,\ldots,n$.
By comparing dimensions, the result is immediate.
\end{proof}

\section{Proof of Theorem \ref{mainthm}}

The \etale quadratic extension $K$ is isomorphic to $k \times k$ or a quadratic field extension of $k$.
\\\\
\emph{Case 1.} $K \cong k \times k$.
\\\\
\indent 
In fact, Totaro's question has a positive answer for adjoint groups of type $A_n$ for \emph{any} $n$ in this case.
Proposition 2.4 of Knus--Merkurjev--Rost--Tignol \cite{thebook} tells us that $(A,\sigma) \cong (B \times B^{\textup{op}},\varepsilon)$ where $B$ is a central simple algebra of degree $n+1$ over $k$ and $\varepsilon$ is the exchange involution on $B \times B^{\textup{op}}$.
So $G \cong \Aut(A,\sigma) \cong \PGL_1(B)$.
Since $H^1(k,\GL_1(B)) = 1$ by a generalization of Hilbert 90, taking Galois cohomology of the short exact sequence \[ 1 \to \G_m \to \GL_1(B) \to \PGL_1(B) \to 1 \]  of linear algebraic groups over $k$ yields an injection $H^1(k,\PGL_1(B)) \hookrightarrow \Br k$.
A $\PGL_1(B)$--torsor is a Severi--Brauer variety $X$ associated to some central simple algebra $C$ of degree $\deg(B)$ over $k$, and the injection $H^1(k,\PGL_1(B)) \hookrightarrow \Br k$ is given by $[X] \mapsto [C \otimes_k B^{\textup{op}}]$.
So $\ind([X]) = \ind_{\textup{Sch}}(C \otimes_k B^{\textup{op}})$ where $\ind_{\textup{Sch}}(C \otimes_k B^{\textup{op}})$ denotes the \emph{Schur index} of $C \otimes_k B^\textup{op}$, the degree of its Brauer--equivalent division algebra and therefore the minimal degree of a separable splitting field for $C \otimes_k B^\textup{op}$ (cf. Proposition 4.5.4 from Gille--Szamuely \cite{gisz06}).
The $\PGL_1(B)$-torsor $X$ then has a point over this field, as desired.
\\\\
\emph{Case 2.1.} $K/k$ is a separable quadratic field extension and $G$ is adjoint of type $A_1$.
\\\\
\indent
$G \cong \Aut(A,\sigma)$ where $A$ is a quaternion algebra over $K$ and $\sigma$ is a $K/k$-involution on $A$.
The following theorem of Albert says that quaternion algebras with $K/k$--involutions are completely determined by certain quaternion subalgebras over $k$.

\begin{thm}[Albert \cite{albert61}, pp. 61]
\label{frombelow}
Let $Q$ be a quaternion division algebra over $K$ with $K/k$--involution $\sigma$.
Then there exists a unique quaternion division subalgebra $Q_0 \subseteq Q$ over $k$ with its canonical (symplectic) involution $\sigma_0$ such that $Q \cong Q_0 \otimes_k K$ and $\sigma \cong \sigma_0 \otimes \bar{\ \ }$ where $\textup{Gal}(K/k) = \{\textup{id}, \bar{\ \ }\}$.
\end{thm}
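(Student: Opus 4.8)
The plan is to descend $\sigma$ through the canonical involution by Galois descent along $K/k$. Write $x \mapsto \bar{x}$ for the nontrivial element of $\Gal(K/k)$, and call a map $f$ on $Q$ \emph{semilinear} if $f(\lambda q) = \bar\lambda f(q)$ for $\lambda \in K$. Let $\gamma$ denote the canonical (symplectic) involution on the quaternion algebra $Q$, characterized by $\gamma(q) = \tr(q)\cdot 1 - q$ in terms of the reduced trace $\tr \colon Q \to K$; it is $K$-linear and of the first kind. Since $\sigma$ is a semilinear antiautomorphism and $\gamma$ is a $K$-linear antiautomorphism, the composite $\theta := \gamma \circ \sigma$ is a semilinear ring \emph{automorphism} of $Q$. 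The goal is to verify that $\theta$ is involutive, so that it furnishes a descent datum, and then to identify the descended algebra and involution.

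First I would show that $\gamma$ commutes with $\sigma$. Applying the antiautomorphism $\sigma$ to the relation $q^2 - \tr(q)q + N(q) = 0$, where $N$ is the reduced norm, and using semilinearity shows that $\sigma(q)$ satisfies $X^2 - \overline{\tr(q)}X + \overline{N(q)} = 0$; this quadratic must be the reduced characteristic polynomial of $\sigma(q)$, so $\tr(\sigma(q)) = \overline{\tr(q)}$. Hence $\gamma(\sigma(q)) = \overline{\tr(q)} - \sigma(q) = \sigma(\tr(q)\cdot 1 - q) = \sigma(\gamma(q))$, i.e. $\gamma\sigma = \sigma\gamma$ (the canonical involution is central, commuting even with semilinear maps). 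It follows at once that $\theta^2 = \gamma\sigma\gamma\sigma = \gamma^2\sigma^2 = \id$, so $\theta$ is a semilinear automorphism of order dividing $2$ and thus a genuine Galois descent datum for $Q$ along the quadratic extension $K/k$.

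Next I would invoke Galois descent. Setting $Q_0 := \{q \in Q : \theta(q) = q\}$, the semilinear form of Hilbert 90 (Speiser's theorem) yields a natural $K$-algebra isomorphism $Q_0 \otimes_k K \xrightarrow{\sim} Q$, so $\dim_k Q_0 = 4$. Descending centers and two-sided ideals gives $Z(Q_0) = k$ and $Q_0$ simple, so $Q_0$ is a quaternion $k$-algebra; as a subalgebra of the division ring $Q$ it has no zero divisors and is therefore division. Because $\gamma$ commutes with $\theta$, it preserves $Q_0$, and $\sigma_0 := \gamma|_{Q_0}$ is the canonical involution of $Q_0$, so that $\gamma = \sigma_0 \otimes \id_K$ under the identification above. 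Since $\theta$ fixes $Q_0$ and is semilinear, $\theta = \id_{Q_0} \otimes \bar{\ \ }$; and as $\theta = \gamma\sigma$ gives $\sigma = \gamma\theta$, we obtain $\sigma = (\sigma_0 \otimes \id_K)(\id_{Q_0} \otimes \bar{\ \ }) = \sigma_0 \otimes \bar{\ \ }$, exactly as claimed.

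Finally, uniqueness is built into the construction: the identity $\theta = \gamma\sigma$ shows $Q_0 = \{q \in Q : \sigma(q) = \gamma(q)\}$, an intrinsic description depending only on the pair $(Q,\sigma)$, so any $k$-subalgebra with the stated properties coincides with this fixed algebra. The main obstacle is the centrality computation $\gamma\sigma = \sigma\gamma$: everything downstream is formal descent, but it is precisely the commutation of the canonical involution with the semilinear $\sigma$ that forces $\theta^2 = \id$ and makes the descent datum available. Without it, $\theta$ would be a semilinear automorphism of indeterminate order and the descent would not produce a $k$-form at all.
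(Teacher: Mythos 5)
Your proof is correct, but it is worth noting that the paper does not prove this statement at all: Theorem \ref{frombelow} is quoted as a classical result with a citation to Albert \cite{albert61}, so there is no internal argument to compare against. What you have written is essentially the standard modern proof (it appears as Proposition 2.22 in Knus--Merkurjev--Rost--Tignol \cite{thebook}): compose the unitary involution $\sigma$ with the canonical involution $\gamma$ to get a semilinear automorphism $\theta = \gamma\sigma$, check $\gamma\sigma = \sigma\gamma$ so that $\theta^2 = \id$, and apply Galois descent to the fixed algebra $Q_0 = Q^\theta = \{q \in Q : \sigma(q) = \gamma(q)\}$, which also hands you uniqueness for free as an intrinsic description. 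Albert's original argument is by contrast a more explicit structure-theoretic computation with quaternion bases; your route is cleaner, characteristic-free, and generalizes (the same composition-with-$\gamma$ trick underlies the discriminant algebra constructions for higher-degree unitary involutions). Two small points deserve slightly more care than you give them. First, in the trace-preservation step, the claim that $X^2 - \overline{\tr(q)}X + \overline{N(q)}$ ``must be'' the reduced characteristic polynomial of $\sigma(q)$ is only automatic when $\sigma(q) \notin K$; you should note that $\sigma(K) = K$, so $q \notin K$ forces $\sigma(q) \notin K$, and handle central $q$ by the direct computation $\tr(\sigma(q)) = 2\bar{q} = \overline{\tr(q)}$. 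Second, the identification of $\gamma|_{Q_0}$ with the canonical involution of $Q_0$ rests on compatibility of reduced trace with the scalar extension $Q_0 \otimes_k K \cong Q$; this is standard but is the hinge on which both the formula $\sigma = \sigma_0 \otimes \bar{\ \ }$ and the uniqueness argument turn, so it merits an explicit sentence. Neither point is a gap in substance.
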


So there is a unique quaternion algebra $A_0$ over $k$ with canonical involution $\sigma_0$ such that $(A,\sigma) \cong (A_0,\sigma_0) \otimes_k K$.
Given any $[(B,\tau)] \in H^1(k,G)$ with descent $[(B_0,\tau_0)]$, $(A,\sigma)$ and $(B,\tau)$ are completely determined by $A_0$ and $B_0$, and for any field extension $L/k$,
\[
\begin{array}{rcl}
[(B,\tau)]_L = 1 \in H^1(L,G_L) & \Leftrightarrow & (A,\sigma) \otimes_k L \cong (B,\tau) \otimes_k L \\
& \Leftrightarrow & A \otimes_k L \cong B \otimes_k L \textup{ and } \sigma \otimes \id_L \cong \tau \otimes \id_L \\
& \Leftrightarrow & A_0 \otimes_k L \cong B_0 \otimes_k L \\
& \Leftrightarrow & L \textup{ splits } A_0 \otimes_k B_0.
\end{array}
\]
So the field extensions trivializing $[(B,\tau)] \in H^1(k,G)$ are precisely the splitting fields of the central simple algebra $A_0 \otimes_k B_0$.
In particular, $\ind_{\textup{Sch}}(A_0 \otimes_k B_0) = \ind([B,\tau])$.
As above, there is a separable splitting field of $A_0 \otimes_k B_0$ of degree $\ind_{\textup{Sch}}(A_0 \otimes_k B_0)$ over $k$, yielding the result.
\\\\
\emph{Case 2.2.} $K/k$ is a separable quadratic field extension and $G$ is adjoint of type $A_{2n}$.
\\\\
\indent
$G \cong \Aut(A,\sigma)$ where $A$ is a central simple algebra odd degree $2n+1$ over $K$.
Fix $[(B,\tau)] \in H^1(k,G)$, and let $D$ be the division algebra Brauer--equivalent to $A \otimes_K B^{\textup{op}}$.
If $D$ is split by some field extension $L/k$, then so is $A \otimes_K B^{\textup{op}}$, in which case $A \otimes_k L \cong B \otimes_k L$.
Then either $\sigma$ and $\tau$ become isomorphic over $L$, in which case we are done, or $\sigma$ and $\tau$ become isomorphic over $KL$ by Lemma \ref{swap}.
Since every field extension that trivializes $[(B,\tau)]$ necessarily splits $D$, we see that $\ind([(B,\tau)]) = 2^{\theta}\ind_{\textup{Sch}}(A \otimes_K B^\textup{op})$ where $\theta = 0$ or 1.

Suppose first that $\ind([(B,\tau)]) = \ind_{\textup{Sch}}(A \otimes_K B^\textup{op})$.
Since $K^\sigma = K^\tau = k$, \[\textup{cores}(D) = \textup{cores}(A)\textup{cores}(B^\textup{op}) = 0 \in \Br k.\]
So $D$ admits a unitary involution $\delta$ such that $K^\delta = k$ by Theorem \ref{cores}.
By Theorem \ref{max}, $D$ contains a maximal subfield $E$, separable over $k$, such that $\delta(E) = E$ and $E = KE^\delta$.
Since \[\ind_{\textup{Sch}}(A \otimes_K B^\textup{op}) = \deg(D) = [E:K] = [E^\delta : k]\] and $D \otimes_k E^\delta \cong D \otimes_K E$ is split, $A \otimes_k E^{\delta} \cong B \otimes_k E^{\delta}$.
Then $\ind([(B,\tau)])$ is odd as \[\ind([(B,\tau)]) = \ind_{\textup{Sch}}(A \otimes_K B^\textup{op}) \mid \deg(A \otimes_k B^\textup{op}) = (2n+1)^2.\]
So there is a field extension $L/k$ of odd degree such that $(A,\sigma) \otimes_k L \cong (B,\tau) \otimes_k L$, hence \[ ((A,\sigma) \otimes_k E^{\delta}) \otimes_{E^{\delta}} (E^{\delta} \otimes_k L) \cong ((B,\tau) \otimes_k E^{\delta}) \otimes_{E^{\delta}} (E^{\delta} \otimes_k L).\]
In particular, $\sigma$ and $\tau$ (viewed as involutions on the isomorphic algebras $A \otimes_k E^\delta$ and $B \otimes_k E^\delta$) become isomorphic over $E^\delta \otimes_k L$.
Since $[L:k]$ is odd, $E^\delta \otimes_k L$ is isomorphic to a direct product of field extensions of $E^\delta$, at least one of which must have odd degree, else $\dim_{E^\delta}(E^\delta \otimes_k L)$ would be even.
Call this extension $M$.
Then $\sigma$ and $\tau$ become isomorphic over $M$.
As $[M:E^\delta]$ is odd, $\sigma$ and $\tau$ become isomorphic over $E^\delta$ by Proposition \ref{schar}, meaning that $(A,\sigma) \otimes_k E^\delta \cong (B,\tau) \otimes_k E^\delta$.
Since $[E^\delta : k] = \ind([(B,\tau)])$, it suffices to take $F = E^\delta$.

Finally, suppose that $\ind([(B,\tau)]) = 2\ind_{\textup{Sch}}(A \otimes_K B^\textup{op})$.
Proceed exactly as above to obtain the separable field extension $E^\delta/k$ of degree $\ind_{\textup{Sch}}(A \otimes_K B^\textup{op})$ such that $A \otimes_k E^\delta \cong B \otimes_k E^\delta$.
By Lemma \ref{swap}, $(A,\sigma) \otimes_k KE^\delta \cong (B,\tau) \otimes_k KE^\delta$.
Since $\ind_{\textup{Sch}}(A \otimes_K B^\textup{op})$ is odd, \[[KE^\delta:k] = [K:k][E^\delta :k] = 2\ind_{\textup{Sch}}(A \otimes_K B^\textup{op}) = \ind([(B,\tau)]),\] and so it suffices to take $F = KE^\delta$, completing the proof. \hspace*{\fill}$\square$

\bibliographystyle{alpha}
\bibliography{totaro}

\def\cftil#1{\ifmmode\setbox7\hbox{$\accent"5E#1$}\else
  \setbox7\hbox{\accent"5E#1}\penalty 10000\relax\fi\raise 1\ht7
  \hbox{\lower1.15ex\hbox to 1\wd7{\hss\accent"7E\hss}}\penalty 10000
  \hskip-1\wd7\penalty 10000\box7}
\begin{thebibliography}{KMRT98}

\bibitem[Alb61]{albert61}
A.~A. Albert.
\newblock {\em Structure of algebras}.
\newblock Revised printing. American Mathematical Society Colloquium
  Publications, Vol. XXIV. American Mathematical Society, Providence, R.I.,
  1961.

\bibitem[BFL90]{bayerlenstra90}
E.~Bayer-Fluckiger and H.~W. Lenstra, Jr.
\newblock Forms in odd degree extensions and self-dual normal bases.
\newblock {\em Amer. J. Math.}, 112(3):359--373, 1990.

\bibitem[Bha16]{niv16}
N.~Bhaskhar.
\newblock On {S}erre's injectivity question and norm principle.
\newblock {\em Comment. Math. Helv.}, 91(1):145--161, 2016.

\bibitem[Bla11a]{black11a}
J.~Black.
\newblock Implications of the {H}asse principle for zero cycles of degree one
  on principal homogeneous spaces.
\newblock {\em Proc. Amer. Math. Soc.}, 139(12):4163--4171, 2011.

\bibitem[Bla11b]{black11b}
J.~Black.
\newblock Zero cycles of degree one on principal homogeneous spaces.
\newblock {\em J. Algebra}, 334:232--246, 2011.

\bibitem[BP14]{blackpari14}
J.~Black and R.~Parimala.
\newblock Totaro's question for simply connected groups of low rank.
\newblock {\em Pacific J. Math.}, 269(2):257--267, 2014.

\bibitem[BQM14]{blackque14}
J.~Black and A.~Qu{\'e}guiner-Mathieu.
\newblock Involutions, odd degree extensions and generic splitting.
\newblock {\em Enseign. Math.}, 60(3-4):377--395, 2014.

\bibitem[BS06]{barq06}
P.~B. Barquero-Salavert.
\newblock Similitudes of algebras with involution under odd-degree extensions.
\newblock {\em Comm. Algebra}, 34(2):625--632, 2006.

\bibitem[CTC79]{colliocoray79}
J.-L. Colliot-Th{\'e}l{\`e}ne and D.~Coray.
\newblock L'\'equivalence rationnelle sur les points ferm\'es des surfaces
  rationnelles fibr\'ees en coniques.
\newblock {\em Compositio Math.}, 39(3):301--332, 1979.

\bibitem[Flo04]{florence04}
M.~Florence.
\newblock Z\'ero-cycles de degr\'e un sur les espaces homog\`enes.
\newblock {\em Int. Math. Res. Not.}, (54):2897--2914, 2004.

\bibitem[GH06]{garihoff06}
S.~Garibaldi and D.~W. Hoffmann.
\newblock Totaro's question on zero-cycles on {$G_2$}, {$F_4$} and {$E_6$}
  torsors.
\newblock {\em J. London Math. Soc. (2)}, 73(2):325--338, 2006.

\bibitem[GS]{totori}
R.~L. Gordon-Sarney.
\newblock Totaro's question for tori of low rank.
\newblock {\em accepted to Trans. Amer. Math. Soc.}

\bibitem[GS06]{gisz06}
P.~Gille and T.~Szamuely.
\newblock {\em Central simple algebras and {G}alois cohomology}, volume 101 of
  {\em Cambridge Studies in Advanced Mathematics}.
\newblock Cambridge University Press, Cambridge, 2006.

\bibitem[Jac96]{jacobson96}
N.~Jacobson.
\newblock {\em Finite-dimensional division algebras over fields}.
\newblock Springer-Verlag, Berlin, 1996.

\bibitem[KMRT98]{thebook}
M.-A. Knus, A.~Merkurjev, M.~Rost, and J.-P. Tignol.
\newblock {\em The book of involutions}, volume~44 of {\em American
  Mathematical Society Colloquium Publications}.
\newblock American Mathematical Society, Providence, RI, 1998.
\newblock With a preface in French by J. Tits.

\bibitem[Par05]{pari05}
R.~Parimala.
\newblock Homogeneous varieties---zero-cycles of degree one versus rational
  points.
\newblock {\em Asian J. Math.}, 9(2):251--256, 2005.

\bibitem[Sch75]{schar75}
W.~Scharlau.
\newblock Zur {E}xistenz von {I}nvolutionen auf einfachen {A}lgebren.
\newblock {\em Math. Z.}, 145(1):29--32, 1975.

\bibitem[Sch85]{schar85}
W.~Scharlau.
\newblock {\em Quadratic and {H}ermitian forms}, volume 270 of {\em Grundlehren
  der Mathematischen Wissenschaften [Fundamental Principles of Mathematical
  Sciences]}.
\newblock Springer-Verlag, Berlin, 1985.

\bibitem[Ser95]{serre95}
J.-P. Serre.
\newblock Cohomologie galoisienne: progr\`es et probl\`emes.
\newblock {\em Ast\'erisque}, (227):Exp.\ No.\ 783, 4, 229--257, 1995.
\newblock S{\'e}minaire Bourbaki, Vol. 1993/94.

\bibitem[Tot04]{totaro04}
B.~Totaro.
\newblock Splitting fields for {$E_8$}-torsors.
\newblock {\em Duke Math. J.}, 121(3):425--455, 2004.

\end{thebibliography}

\end{document}